\newtheorem{theorem}{Theorem}[section]
\newtheorem{defi}[theorem]{Definition}
\newtheorem{stw}[theorem]{Fact}
\newtheorem{obs}[theorem]{Observation}
\newtheorem{prop}[theorem]{Proposition}
\newtheorem{lemat}[theorem]{Lemma}
\newtheorem{remark}[theorem]{Remark}
\newtheorem*{remarks}{Remark}
\newtheorem*{thmA}{Theorem A}
\newtheorem*{thmB}{Theorem B}
\newtheorem*{corC}{Corollary C}
\newcommand{\szereg}{\sum_{n=0}^{\infty} }
\newcommand{\muf}{\widehat \mu}
\newcommand{\inty}{\int_{-\infty}^{\infty}}
\newcommand{\intj}{\int_{J}}
\newcommand{\into}{\int_{\Omega}}
\newcommand{\iloczyn}{\prod_{n=0}^{\infty}}
\newcommand{\anp}{\alpha_n}
\newcommand{\gx}{g(b_nx+\theta_n)}
\newcommand{\gy}{g(b_ny+\theta_n)}
\newcommand{\aneps}{A_{n}}
\newcommand{\anoeps}{A_{n_0}}
\newcommand{\anieps}{A_{n_1}}
\newcommand{\deltag}{|\gx-\gy|}
\newcommand{\bnn}{B_{n_0,n_1} }
\newcommand{\leb}{\mathcal{L}}
\newcommand{\A}{\mathcal{A}}
\newcommand{\Z}{\mathbb{Z}}
\newcommand{\N}{\mathbb{N}}
\newcommand{\R}{\mathbb{R}}
\newcommand{\Prob}{\mathbb{P}}
\newcounter{sekcja}
\author{Julia Romanowska}
\address{Institute of Mathematics, University of Warsaw, ul. Banacha 2, 02-097 Warsaw, Poland}
\email{romanoju@mimuw.edu.pl}
\title[Measure  and Hausdorff dimension of Weierstrass-type functions]{Measure and Hausdorff dimension of randomized Weierstrass-type functions}
\subjclass[2010]{Primary 28A80; Secondary 28A78, 37A45}
\keywords{Hausdorff dimension, Weierstrass function, occupation measure}
\begin{document}

\maketitle

\begin{abstract}
In this paper we consider functions of the type
$$f(x)  = \szereg a_n g(b_nx+\theta_n),$$ where $(a_n)$ are independent random variables uniformly distributed on $(-a^n, a^n)$ for some $0<a<1$, $b_{n+1}/b_n \geq b >1$, $a^2b> 1$ and $g$  is a $C^1$ periodic real function with finite number of critical points in every bounded interval. We prove that the occupation measure for $f$ has $L^2$ density almost surely. Furthermore, the Hausdorff dimension of the graph of $f$ is almost surely equal to $D = 2+ \log{a}/\log{b}$ provided $ b = \lim_{n\rightarrow \infty}b_{n+1}/b_n>1$ and $ab>1$.
\end{abstract}
\section{Introduction}
In this paper we study a family of nowhere differentiable functions, among which probably the most famous example is the Weierstrass function (1872):
\begin{equation*}\label{weierstrass}
W(x)= \szereg a^n \cos(2 \pi b^nx).
\end{equation*}
 Weierstrass proved that the function is nowhere differentiable for some class of $a$ and $b$, later Hardy (\cite{hardy1916}) extended the result for all $a, b$ such that $0< a< 1< b$ and $ab> 1$.
Functions of the Weierstrass type were considered by  Besicovitch and Ursell (\cite{besicovitchursell}) in 1930s and later in 1980s by Berry and Lewis (\cite{berry1980weierstrass}) and Ledrappier (\cite{ledrappier1992dimension}) as examples of fractal curves, questions about dimension were raised. As the graph of $W(x)$ is self-affine in the sense that $aW(bx)$ differs from  $W(x)$ by a smooth function $\cos(2\pi x)$ it suggests that the dimension should be equal to $D = 2 - \alpha$ for $\alpha=-\frac{\log a}{\log b}$ (notice that under the previous conditions, $ 1< D< 2$). Kaplan, Mallet-Paret and Yorke (\cite{kaplan1984lyapunov}) in 1984 proved that box-counting dimension is equal to $D$. However, the question of determining the Hausdorff dimension of the graph is still not completely solved.
Przytycki and Urbański (\cite{przytycki1989hausdorff}) in 1989 proved that the Hausdorff dimension of the graph is bigger than 1. Mauldin and Williams (\cite{mauldin1986hausdorff}) in  1986 considered a function $w_b (x) = \sum_{-\infty}^\infty b^{-\alpha n} \left[ \phi (b^nx+\theta_n) - \phi (\theta_n)\right]$ for $b>1$, $0<\alpha <1$, arbitrary $\theta_n$ and $\phi$ with period 1. They proved that for sufficiently large $b$ the Hausdorff dimension of the graph of $w_b$ has a lower bound $2-\alpha - \frac{C}{\ln b}$. 

Recently, Biacino in \cite{biacino} 
 showed that if $b$ is large enough then the Hausdorff dimension of the graph of $W$ is equal to $2-\alpha$.

 In \cite{hunt} Hunt  considered function of the form
\begin{equation*}
H(x) = \szereg a^n \cos(2 \pi b^n x + \theta_n)
\end{equation*}
 with random phases $\theta_n$  (independent random variables with the same distribution). Using potential theory methods he proved that the Hausdorff dimension of the graph is almost surely equal to $D$. Other dynamical systems with random phases were studied by Kifer (\cite{kifer1997computations}). 
Many papers, such as e.g. \cite{kono1986self},  support the hypothesis that when $h$ is roughly self-affine and $\mu$ is absolutely continuous with respect to Lebesgue measure Hausdorff and box-counting dimension coincide. However, if  $\frac{b_{n+1}}{b_n}\rightarrow \infty$, then in many cases Hausdorff dimension is strictly smaller than the upper box-counting dimension, see \cite{baranski2012dimension,besicovitchursell}.

 In this paper we perturb randomly the parameter $a$ in the Weierstrass type functions and obtain that almost surely Hausdorff dimension of the graph is $D$. The result is formulated as the following theorem:
 \begin{thmA}\label{theoremA}

Assume that $f(x) = \sum_{n=0}^\infty a_n g(b_nx+\theta_n)$ satisfies the following conditions:
\begin{enumerate}
\item $\left(a_n\right)_{n=0}^\infty$ is a sequence of  real independent random variables defined on some probabilistic space $(\Omega, \Prob)$ with uniform distribution on $(-a^n, a^n)$   for some $0<a<1$,
\item $\lim_{n\rightarrow \infty} \frac{b_{n+1}}{b_n} = b$ for some $b>1$, $ab> 1$,
\item   $\theta_n \in \R$ for $n \in \N$.
\item $g: \R \rightarrow \R$ is $C^1$ periodic and has  a finite number of critical points in every bounded interval.
\end{enumerate}
Then the Hausdorff and box dimension of the graph of $f$ are equal to:
\begin{equation*}
\dim_H \textnormal{graph} f = D = 2+\frac{\log a }{\log b}
\end{equation*}
almost surely.

 \end{thmA}

\begin{remarks}
The condition \textnormal{(4)} is satisfied if $g$ is non-constant periodic analytic.
\end{remarks}

We also examine the occupation measure for $f$, that is 
$$ \mu(S) = \leb\left(\{x \in J: \ f(x) \in S\}\right),$$ where $J=[0,T]$, $T$ is a period of $f$ and  $\leb$ is the Lebesgue measure. 
In this paper we show that the occupation measure for the Weierstrass type function of the form
\begin{equation*}
f(x)= \szereg a_n g(b_nx+\theta_n)
\end{equation*}
for randomly chosen $a_n$ has $L^2$ density with respect to Lebesgue measure almost surely, which is stated as Theorem B:
 \begin{thmB}\label{theoremB}
 Let $f(x)= \szereg a_n g(b_nx+\theta_n)$ for $g: \R \rightarrow \R$, $T$-periodic, $C^1$ with a finite number of critical points in every bounded interval,  satisfy the following conditions:
 \begin{enumerate}
\item  $(a_n)_{n=0}^\infty$ is a sequence of independent random variables defined on some probabilistic space $(\Omega, \Prob)$ with uniform distribution on $(-a^n, a^n)$, $0<a<1$, 
\item $(b_n)_{n=0}^\infty$, there exists $b>1$ such that  $\frac{b_{n+1}}{b_n}\geq b$ for all $n>0$.
\item $a^2b> 1$, \label{ass3}
\item $\theta_n \in \R$ .
\end{enumerate}
Then the occupation measure for the function $f$ is absolutely continuous with $L^2$ density almost surely. 

Moreover, if $b_n=b^n$, $b \in \N, \ b>1$ and $\theta_n=0$ for every $n$, then the assumption \eqref{ass3} may be replaced by $ab> 1$. 
 \end{thmB}
 A result of this kind (Theorem B) was announced in (\cite{hunt}), but to our knowledge it has never been published.

\begin{corC}
For the Weierstrass-type function of the form 
\begin{equation*}
w_a(x) = \szereg a_n \cos (2\pi b^n x)
\end{equation*}
 if
\begin{enumerate}
\item $(a_n)_{n=0}^\infty$ is a sequence of independent random variables with uniform distribution on $(-a^n, a^n)$, $0<a<1$, 
\item $b \in \N$ and $ab> 1$, $b>1$
\end{enumerate}
then almost surely the Hausdorff dimension of the graph is equal to $D= 2+\frac{\log a}{\log b}$ and the occupation measure is absolutely continuous with $L^2$ density.

\end{corC}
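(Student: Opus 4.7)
The plan is to recognize that Corollary C is essentially a direct specialization of Theorems A and B to the function $g(x)=\cos(2\pi x)$, $b_n=b^n$, $\theta_n=0$. So the proof reduces to verifying that all hypotheses of these two theorems are met, with particular attention to the delicate exponent condition.

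First I would verify the conditions for Theorem A. The sequence $(a_n)$ being independent uniform on $(-a^n,a^n)$ is given, so hypothesis (1) holds verbatim. Since $b_n=b^n$, the ratio $b_{n+1}/b_n$ is identically $b$, so $\lim_{n\to\infty} b_{n+1}/b_n = b > 1$, and $ab>1$ is assumed, which gives (2). Setting $\theta_n=0\in\R$ satisfies (3). Finally, $g(x)=\cos(2\pi x)$ is $C^1$, $1$-periodic, and has exactly two critical points per period (the maximum and minimum), hence finitely many in every bounded interval; alternatively, the remark after Theorem A applies because $\cos(2\pi x)$ is non-constant periodic and analytic. Thus Theorem A yields $\dim_H\mathrm{graph}(w_a)=D=2+\log a/\log b$ almost surely.

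Next I would verify Theorem B. Conditions (1), (2) (with equality $b_{n+1}/b_n=b$), (4), and the regularity of $g$ are immediate as above. The only potential obstruction is condition (3), which demands $a^2 b>1$, while in Corollary C we only assume $ab>1$. This is exactly where the \emph{moreover} clause of Theorem B is needed: under the stronger structural assumption $b_n=b^n$ with $b\in\N$, $b>1$, and $\theta_n=0$, the hypothesis $a^2 b>1$ may be replaced by $ab>1$. Since $w_a$ satisfies precisely these structural assumptions, the weaker exponent condition suffices, and Theorem B gives that the occupation measure $\mu$ of $w_a$ is absolutely continuous with $L^2$ density almost surely.

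Combining these two verifications finishes the proof. There is essentially no independent obstacle here; the main conceptual point is noticing that the integer-$b$, zero-phase, geometric-$b_n$ setting of Corollary C is exactly the one for which the relaxed exponent condition in the \emph{moreover} part of Theorem B is available, so that both conclusions hold under the single joint hypothesis $ab>1$ rather than requiring the strictly stronger $a^2 b>1$.
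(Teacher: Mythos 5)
Your proposal is correct and follows the same route as the paper: Section 6 derives Corollary C by checking that $g=\cos(2\pi\cdot)$, $b_n=b^n$, $\theta_n=0$ satisfy the hypotheses of Theorem A (giving the dimension) and of the second, ``moreover'' part of Theorem B (giving the $L^2$ density under $ab>1$ rather than $a^2b>1$). You correctly identified the only delicate point, namely that the relaxed exponent condition is exactly what the integer-$b$, zero-phase, geometric-$b_n$ case of Theorem B supplies.
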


The paper is organized as follows: in Section 2 we present basic notation and discussion on the assumptions which should be made on the function $g$. Section 3 provides proof of Theorem A,  in Section 4 we state Theorem B, in the following Section 5 some technical lemmas are proved, and  finally Section 6 discusses an example - the Weierstrass function.

\section{Preliminaries}

For basic definitions and properties of the Hausdorff dimension, we refer to books by Falconer \cite{falconer2007fractal} and Mattila \cite{mattila1999geometry}. By $\leb$ we denote an appropriate Lebesgue measure (on $\R$ or $\R^2$) and for a given set $A$ we denote its complement  by $A^c$. The Hausdorff dimension and box dimension are denoted respectively as  $\dim_H$, $\dim_B$.

Now we will present some consequences of the assumptions made on $g$.
\newcommand{\E}{\mathbb{E}}
\begin{lemat}\label{properties_g}
Let $g: \R \rightarrow \R$ be a  periodic $C^1$ function of period $1$ with a finite number of critical points in $[0,1]$. Then there exists $C>0$  such that for all $\epsilon>0$ there is $\delta>0$ such that $\delta \xrightarrow[]{\epsilon\rightarrow 0} 0$ and one can cover the set
\begin{equation*}
A = \left\{(x,y) \in [0,1]^2: \left|g(x)-g(y)\right|<\epsilon\right\}
\end{equation*}
  with $N \leq \frac{C}{\delta}$ squares with vertical and horizontal sides of length $\delta$.
\end{lemat}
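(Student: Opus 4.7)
The plan is to split $[0,1]^2$ into a region near the critical set of $g$ and a region bounded away from it, and to count $\delta$-grid squares meeting $A$ in each. Let $c_1, \ldots, c_k$ be the critical points of $g$ in $[0,1]$, and for $\eta > 0$ set $U_\eta := \bigcup_{i=1}^k (c_i-\eta, c_i+\eta)\cap[0,1]$ and $V_\eta := [0,1]\setminus U_\eta$. Write $m(\eta) := \min_{V_\eta}|g'|$; this is positive for $\eta > 0$ but tends to $0$ as $\eta\to 0^+$, because $g'$ is continuous and vanishes at each $c_i$. Between consecutive critical points $g$ is strictly monotone, so $[0,1]$ decomposes into at most $k+1$ monotone pieces $J_1, \ldots, J_{k+1}$. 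The goal is then to choose $\eta$ and $\delta$ as functions of $\epsilon$ so that both $\eta \leq \delta$ and $2\epsilon \leq m(\eta)\,\delta$ hold.

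For grid squares inside $V_\eta \times V_\eta$, I would decompose the region into the $(k+1)^2$ products $(J_i\cap V_\eta)\times(J_j\cap V_\eta)$. Since $g$ is monotone on $J_j$ with $|g'|\geq m(\eta)$ there, the $y$-slice of $A$ at any fixed $x$ is an interval of length at most $2\epsilon/m(\eta)$ centered at $(g|_{J_j})^{-1}(g(x))$. When $2\epsilon/m(\eta)\leq \delta$, this shows that $A\cap\bigl((J_i\cap V_\eta)\times(J_j\cap V_\eta)\bigr)$ lies in the $\delta$-neighborhood of the monotone $C^1$ graph $x \mapsto \bigl(x,(g|_{J_j})^{-1}(g(x))\bigr)$, whose length is bounded by $|J_i|+|J_j|\leq 2$. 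Such a neighborhood is covered by $O(1/\delta)$ grid squares, and summing over the $(k+1)^2$ pairs still gives $O(1/\delta)$. The complementary critical strip $(U_\eta\times[0,1])\cup([0,1]\times U_\eta)$ has area $\leq 4k\eta$, so the $\delta$-grid covers it with at most $4k\eta/\delta^2 + O(1/\delta)$ squares; the constraint $\eta\leq \delta$ makes this $O(1/\delta)$ as well.

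Putting the two bounds together gives $N\leq C/\delta$ with $C$ depending only on $k$ and $\|g'\|_\infty$, hence not on $\epsilon$. It remains to realize the parameter constraints with $\delta(\epsilon)\to 0$. Setting $\eta = \delta$ reduces both conditions to $F(\delta) := \delta\,m(\delta)\geq 2\epsilon$. Because $m(\eta)$ is non-decreasing in $\eta$ and strictly positive for $\eta > 0$ while $F(\delta)\to 0$ as $\delta\to 0^+$, one can simply take $\delta(\epsilon) := \inf\{\delta > 0 : F(\delta)\geq 2\epsilon\}$, which tends to $0$ as $\epsilon\to 0$. The main obstacle I anticipate is precisely this coupling: $\delta$ must be large enough that the thickened curves in $V_\eta\times V_\eta$ fit into $\delta$-wide strips (i.e.\ $2\epsilon/m(\eta)\leq \delta$), yet small enough that the critical strip contributes only $O(1/\delta)$ squares (i.e.\ $\eta\leq\delta$). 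The monotonicity of $\delta\mapsto \delta\,m(\delta)$ together with $m(\eta)\to 0$ is exactly what permits a simultaneous solution; notably, no quantitative rate on $m(\eta)\to 0$ is required, which is fortunate because such a rate is not available in the $C^1$ setting with possibly degenerate critical points.
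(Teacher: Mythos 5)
Your proof is correct and follows essentially the same route as the paper's: both split $[0,1]^2$ into a thin strip around the critical set (handled via the constraint that its width not exceed $\delta$) and products of monotone pieces, where the Mean Value Theorem confines $A$ to a $2\epsilon/m$-neighborhood of a monotone graph and hence into $O(1/\delta)$ grid squares once $2\epsilon/m\le\delta$. The only cosmetic difference is the parametrization: the paper fixes a derivative threshold $\rho$ and lets $\delta(\rho)$ be the length of the intervals covering $\{x:|g'(x)|<\rho\}$, whereas you fix the half-width $\eta=\delta$ and let $m(\delta)=\min_{V_\delta}|g'|$ be the derivative bound, so the coupling condition $\epsilon\le\rho\,\delta(\rho)$ versus $2\epsilon\le\delta\,m(\delta)$ is the same inequality read in the other direction.
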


\begin{proof}
Let $m$ be the number of critical points of $g$ in  $[0,1]$.
Fix $\epsilon>0$.
Since $g\in C^1$, for all $\rho>0$  there exists $\delta(\rho)>0$ such that $\delta(\rho)\xrightarrow[\rho\rightarrow 0]{}0$ and the set $\left\{x\in [0,1]: |g'(x)|<\rho\right\}$ can be covered by $m$ intervals $I_1, \dots I_m$ of length $\delta(\rho)$.
Since $\delta(\rho) \rightarrow 0$, there exists $\rho_\epsilon >0 $ such that 
\begin{equation}\label{rhodelta}
\epsilon \leq \rho_\epsilon \delta(\rho_\epsilon)
\end{equation}

Set $\rho = \rho_\epsilon$, $\delta = \delta(\rho_\epsilon)$.
Let $J_j \subseteq [0,1]$, $j = 1,\dots, M$, $M\in \{ m-1, m, m+1\}$,  be the gaps between intervals $I_i$.
It is obvious that the set 
\begin{equation} \label{cover}
\bigcup_{i=1}^m \left(I_i\times[0,1] \cup [0,1] \times I_i \right)
\end{equation} can be covered by $\frac{C_1}{\delta}$ squares of side $\delta$, for some constant $C_1$ independent of $\epsilon$.

Now, take $j, k \leq M$. Suppose $g'|_{J_j}\geq \rho$ and $g'|_{J_k}\geq\rho$ (the cases when $g$ is decreasing on $J_j$ or $J_k$ can be proved analogously). 
By the definition of $A$, if $A \cap\left(J_j \times J_{k}\right) \neq \emptyset$ then 
\begin{equation*}
A \cap\left(J_j \times J_{k}\right) \subset
\left\{ (x,y): x \in J_j, \ \inf g(J_k) - \epsilon \leq g(x) \leq \sup g(J_k) + \epsilon, \  h_1(x) <y < h_2(x) \right\}
\end{equation*}
where $$h_1(x) = (g|_{J_k})^{-1} (\max\{g(x)-\epsilon, \inf g(J_k) \})$$ 
 and $$h_2(x) = (g|_{J_k})^{-1}( \min\{g(x)+\epsilon,\sup g(J_k)\})$$
 are defined on some  interval in $J_j$ and are continuous and nondecreasing. It is easy to check that the graph of $h_1$ can be covered by $\frac{C_2\left| J_j\right|}{\delta}$ squares of side $\delta$. 
  By the Mean Value Theorem and \eqref{rhodelta} we have $\left| h_1(x) - h_2(x) \right| \leq \frac{2\epsilon}{\rho} \leq 2\delta $.
  Thus we obtain that $ A \cap \left(J_j \times J_{k }\right) $ can be covered by   $\frac{C_3 \left| J_j\right|}{\delta}$ squares of side $\delta$. Summing over $j$ and $k$ we obtain that the set $[0,1]^2 \setminus \bigcup_{i=1}^m \left(I_i\times[0,1] \cup [0,1] \times I_i \right)$ can be covered by $\frac{C_4}{\delta}$ squares of side $\delta$. Together with estimations for the set  \eqref{cover} we conclude the proof. 
\end{proof}

\begin{defi}\label{22} For $A \subset [0,1]^2$ and $ \underline{ \Theta} = (\underline{\theta_1}, \underline{\theta_2}, \dots)$, where $\underline{\theta_j} \in \R^2,$ we define:
\begin{enumerate}
\item  $\A = \sum_{n, m \in \Z} (A + (n,m))$,
\item $\A_n (\underline{\Theta})=[0,1]^2 \cap  \A \cap \left(\frac{\A -\underline{\theta_1}}{b_1} \right) \cap \left(\frac{\A-\underline{\theta_2}}{b_2} \right) \cap \dots \cap \left(\frac{\A-\underline{\theta_n}}{b_n} \right)$, where $$\frac{\A-\underline{\theta_j}}{b_j} =\left\{(x,y): (b_jx, b_j y) + \underline{\theta_j}  \in \A \right\} $$
\end{enumerate}
\end{defi}

Now we will state an important geometric lemma.
\begin{lemat}\label{kwadraciki}
Let $(b_n)_{n=0}^\infty$ such that $b_n>0$, $b_0=1$ and $\frac{b_{n+1}}{b_n}\geq b$ for some $b>1$. Fix $C>0$.
Suppose that $A\subset [0,1]^2$ such that for some $\delta>0$ the set $A$ can be covered by $N\leq \frac{C}{\delta}$ squares of vertical and horizontal sides of length $\delta$.

Then for sufficiently small $\delta$ there exists $\tilde{C}>0$ such that for every $\underline{\Theta}$ and every $n>0$
\begin{equation}\label{oszmiara}
\leb(\A_n(\underline{\Theta})) < \tilde{C}\gamma^n,
\end{equation}
where  $0<\gamma<\frac{1}{b}+\epsilon_0<1$ and  $\epsilon_0 = \epsilon_0(\delta)>0$  is arbitrarily small if $\delta$ is small enough. 

\end{lemat}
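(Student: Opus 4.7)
The plan is induction on $n$: I aim to show a per-step contraction $\leb(\A_n) \leq \gamma \leb(\A_{n-1})$ with $\gamma = \tfrac{1}{b} + \epsilon_0(\delta)$ and $\epsilon_0 \to 0$ as $\delta \to 0$, so that $\leb(\A_n) \leq \tilde{C}\gamma^n$ follows inductively from the base case $\leb(\A_0) = \leb(A) \leq C\delta$.

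For the inductive step I would cover $\A_{n-1}$ by axis-aligned squares $Q_i$ of side $\delta/b_{n-1}$, the natural scale since $\A_{n-1} \subseteq (\A - \underline{\theta_{n-1}})/b_{n-1}$ is itself a union of such squares. The smallness of $\delta$ gives $\delta/b_{n-1} < 1/b_n$, so each $Q_i$ lies in at most four period cells of the next-scale grid $(\A - \underline{\theta_n})/b_n$ (whose period is $1/b_n$). Each period cell contributes at most $N$ scaled copies of side $\delta/b_n$, yielding a local bound $\leb\bigl(Q_i \cap (\A - \underline{\theta_n})/b_n\bigr) \leq 4N(\delta/b_n)^2$. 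Summing gives $\leb(\A_n) \leq 4N \cdot N_{n-1} \cdot (\delta/b_n)^2$, where $N_{n-1}$ is the size of the cover. Combining this with a ``tightness invariant'' $N_{n-1}(\delta/b_{n-1})^2 \lesssim \leb(\A_{n-1})$, which I would maintain inductively by choosing the cover to be the one naturally induced by the periodic structure of $(\A - \underline{\theta_{n-1}})/b_{n-1}$, gives a contraction of order $4N/r_n^2$ where $r_n = b_n/b_{n-1} \geq b$.

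The main obstacle is that this worst-case per-cell factor $4N/r_n^2$ is too weak (it can be much larger than $1/b$ when $N$ is of order $1/\delta$). To improve it to $\tfrac{1}{b} + \epsilon_0$ one must count only those scaled squares that actually meet $Q_i$, not all $N$ available in the four candidate period cells. A global double-counting argument---summing the intersection count over the cover and using the total mass $\leb((\A - \underline{\theta_n})/b_n \cap [0,1]^2) \leq C\delta$ to bound the aggregate number of scaled squares---effectively replaces the factor $N$ by an average count of order $r_n$. The residual correction $\epsilon_0$ absorbs the $(1 + O(\delta r_n))^2$ boundary terms arising from period-cell counting and tends to zero with $\delta$. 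Conceptually, the lemma encodes a one-dimensional rigidity of the configuration inherited from the hypothesis $N \leq C/\delta$: even though the $\A_n$ live in the plane, the cover and its intersections behave as if along a curve, and the contraction rate $1/b$ matches the one-dimensional rate $1/b_n$ of a single periodic factor.
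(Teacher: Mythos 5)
There is a genuine gap: your argument rests on a single-step contraction $\leb(\A_n)\leq\bigl(\tfrac1b+\epsilon_0\bigr)\leb(\A_{n-1})$, and neither of the two devices you invoke to obtain it is substantiated. The ``tightness invariant'' $N_{n-1}(\delta/b_{n-1})^2\lesssim\leb(\A_{n-1})$ cannot be maintained: after an intersection step a covering square typically contains only a thin sliver of $\A_{n-1}$ (components of the intersection are born with diameter much smaller than the grid scale whenever a translated cell of $(\A-\underline{\theta_n})/b_n$ meets a previous square near its edge), so the cover is far from efficient and only the trivial inequality $\leb(\A_{n-1})\leq N_{n-1}\delta_{n-1}^2$ is available. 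The ``global double-counting'' step fares no better: the total mass $\leb\bigl((\A-\underline{\theta_n})/b_n\cap[0,1]^2\bigr)\leq C\delta$ controls the \emph{average} density of the new set over the whole unit square, but $\A_{n-1}$ is a thin, highly structured set on which that density can be of order $1/r_n$ rather than $C\delta$ (this is exactly what happens for the diagonal-strip set $A$ relevant to the application), so averaging over $[0,1]^2$ tells you nothing about the intersection. Worse, the single-step measure contraction you are aiming for appears to be simply false for general $\underline{\Theta}$: a component of $\A_{n-1}$ that was born with diameter much smaller than $\delta/b_n$ can lie entirely inside one cell of $(\A-\underline{\theta_n})/b_n$ and then does not shrink at all at step $n$. (A smaller issue: your claim $\delta/b_{n-1}<1/b_n$ also fails when the ratios $b_n/b_{n-1}$ grow, which the hypotheses permit.)

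The idea that is missing, and that the paper uses, is to give up on per-step contraction entirely and instead group $k$ consecutive steps, with $k$ chosen by $\tfrac{2}{\delta b^{k}}<1\leq\tfrac{2}{\delta b^{k-1}}$, so that the relative scale ratio over one block is at least $b^k\approx 2/\delta$. Then each covering square of side $\delta_m$ meets at most $(\delta b^k+2)^2$ grid cells of the next block, each contributing $N$ new squares, and the crucial point is that the boundary term $+2$ is now dominated by $\delta b^k$; the $k$-step ratio of the \emph{covering areas} $\leb_m=N_m\delta_m^2$ becomes $N(\delta+2/b^k)^2\leq 4N\delta^2\leq 4C\delta$, which is less than $1$, and extracting the $k$-th root gives $\gamma=(4C\delta)^{1/k}\to 1/b$ as $\delta\to 0$ precisely because $k\approx\log(2/\delta)/\log b$. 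The induction is run on the covering numbers $N_m$, not on the measures, and the final bound for arbitrary $n$ is obtained by monotonicity between block indices at the cost of the constant $\tilde C=\gamma^{-k}$. Without this block grouping your contraction factor per step is $N(\delta+2/r_n)^2$, which for $r_n$ close to $b$ is of order $N\sim C/\delta$ and blows up; no averaging over the cover repairs this.
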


\begin{proof}
\begin{figure}[lh]
\centering
  \begin{subfigure}[b]{0.5 \textwidth}
\def\svgwidth{180pt}
 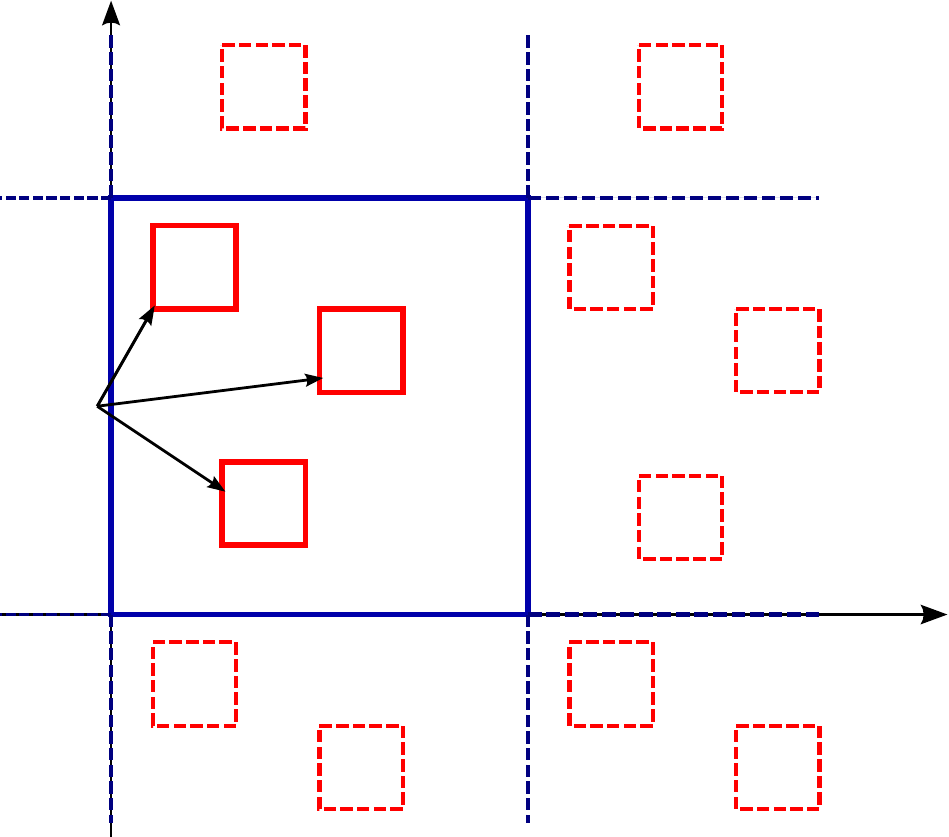
 \caption{The set $\A$}
 \label{setA}
\end{subfigure}
\qquad\qquad\qquad
\begin{subfigure}[b]{0.8\textwidth}
  \def\svgwidth{300pt}
  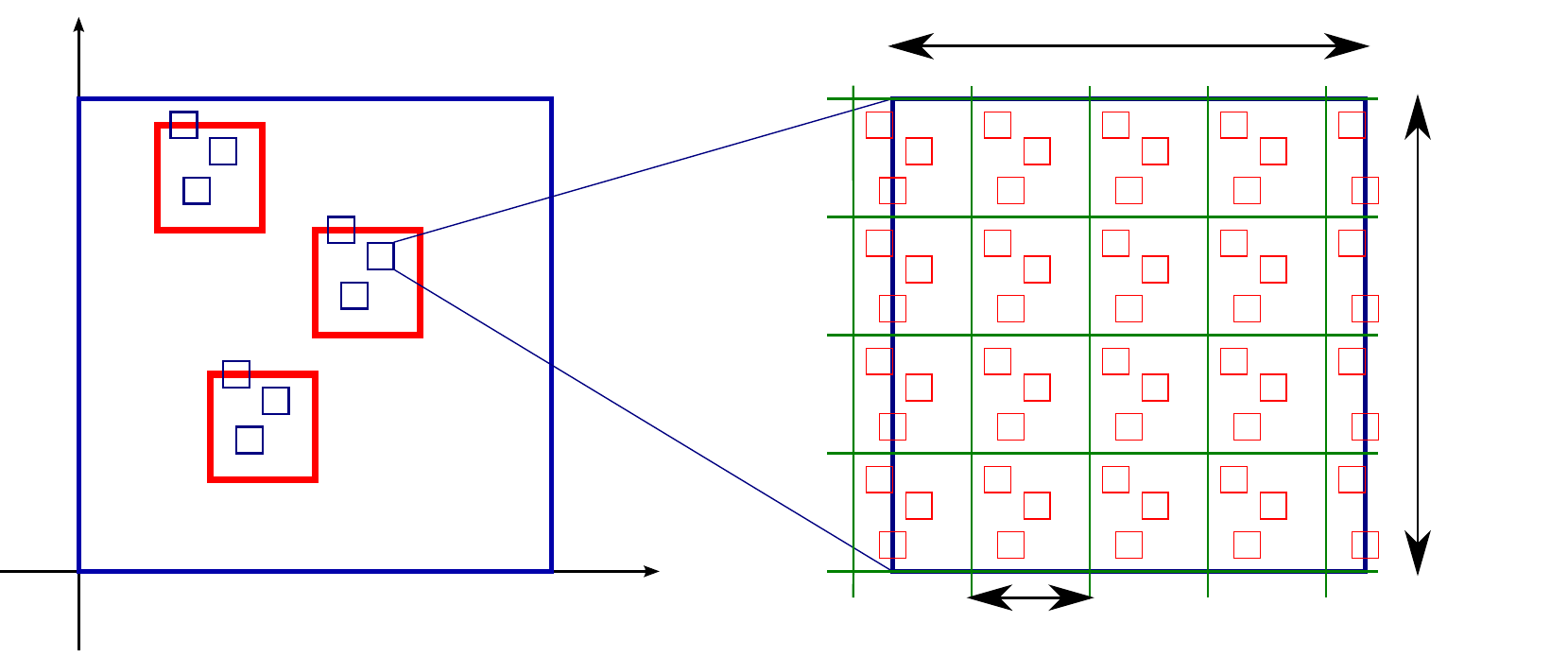
  \caption{One of squares from $n$-th step with $\frac{1}{b_{n+1}}$ grid moved by $\theta_{n+1}$ and a copy of covering of $\A$ inside}
   \label{zoomA}

  \end{subfigure}
\end{figure}
For sufficiently small $\delta$ we can take $k>0$ such that

\begin{equation}\label{wark}
\frac{2}{\delta b^k} < 1 \leq \frac{2}{\delta b^{k-1}}
\end{equation}
and let us take
\begin{equation*}\label{defgamma}
\gamma = \sqrt[k]{N\left(\delta+\frac{2}{b^k}\right)^2}.
\end{equation*}
By \eqref{wark} we can estimate $\gamma$:
\begin{equation*}
\gamma<\sqrt[k]{C\delta\left(1+\frac{2}{\delta b^k}\right)^2}
< \sqrt[k]{4C\delta}
\end{equation*}
and as $b^{k-1} \leq \frac{2}{\delta}$:
\begin{equation*}
k \leq \frac{\log\frac{2}{\delta}}{\log b}+1 = \frac{\log\frac{2}{\delta}+\log b}{\log b}.
\end{equation*}
Thus,
\begin{equation*}
\gamma < (4C \delta)^{\frac{\log b}{\log\frac{2}{\delta}+\log b}} \xrightarrow[\delta\rightarrow 0]{}\frac{1}{b}.
\end{equation*}
Hence, 
\begin{equation*}
\gamma < \frac{1}{b} + \epsilon_0<1.
\end{equation*}
with $\epsilon_0$ arbitrarily small for  sufficiently small $\delta$. For simplicity we write $\A_n = \A_n(\underline{\Theta})$.
Take $m= 0,1,\dots$. Let $N_0 = N$, $\delta_0 = \delta$. Let $\delta_m =\frac{\delta}{b_{mk}}$. Proceeding by induction on $m$ we construct a sequence of coverings of $\A_{mk}$ by squares of horizontal and vertical sides of length $\delta_m$.
  Note that $\A_0 = A$ can be covered by $N_0 = N$ squares of side length $\delta_0=\delta$.
Suppose that $\A_{mk}$ can be covered by $N_m$ squares of side length $\delta_m$ for some $N_m$. 
We estimate $\frac{\delta_m}{\delta_{m+1}}$. Between $m$th and $(m+1)$th steps we shrink our set by $\frac{b_{mk+1}}{b_{mk}}, \frac{b_{mk+2}}{b_{mk+1}}, \dots, \frac{b_{mk+k}}{b_{mk+k-1}}$, each of the ratios is larger or equal than $b$, by the assumptions,  thus

\begin{equation*}\label{delty}
\frac{\delta_m}{\delta_{m+1}} = \frac{b_{mk+1}}{b_{mk}}\cdots \frac{b_{mk+k}}{b_{mk+k-1}}  \geq b^k.
\end{equation*}


 To  calculate how many copies of shrunk set $\A_{mk}$ we get in $[0,1]^2$, so in $\A_{(m+1)k}$, we will cover it with squares with sides $\delta_{m+1}$ and calculate their number $N_{m+1}$.  To do it easily we will 
cover the plane with a square grid with side $\frac{1}{b^{k}}$ and translate the grid by $\underline{\theta}_{mk+1}, \underline{\theta}_{mk+2}, \dots, \underline{\theta}_{mk+k}$.  Each square $Q$ of side $\delta_m$ is now divided into new squares from the grid, some of them possibly sticking out of $Q$. The total number of grid squares in each row can be estimated by:
	\begin{itemize}
	\item $\frac{\delta}{\frac{1}{b^{k}}} = \delta b^{k}$ - squares which are completely inside $Q$ in each row,
	\item there may be at most 2 squares which stick out of $Q$ (horizontally).
	\end{itemize}
So we obtain $\delta b^{k} +2$ grid squares in each row. As we may have at most $\delta b^{k} + 2$ rows (again, 2 rows may stick out vertically of $Q$), the total number of new generation of  squares in $Q$ is at most $N (\delta b^{k} + 2)^2$ (set $\A$ is covered by $N$ squares). We have $N_{m}$ different squares $Q$ of side $\delta_m$, hence the number $N_{m+1}$ of squares of sides $\delta_{m+1}$ covering $\A_{(m+1)k}$ satisfies:
\begin{equation*}
N_{m+1} \leq N_{m} \left(b^{k}\delta+2\right)^2 N.
\end{equation*}

Now, let $\leb_{m} = N_{m} \delta_m^2$.  We get a measure ratio:
\begin{eqnarray*}
\frac{\leb_{m+1}}{\leb_{m}} &= &\frac{N_{m+1}\left(\delta_{m+1}\right)^2}{N_{m} \left(\delta_m\right)^2}\notag\\
& \leq & \frac{N_{m} \left[N(\delta b^{k} +2 )^2 \right]\left(\delta_{m+1}\right)^2}{N_{m} \left(\delta_m\right)^2}  \notag\\
& = & N(\delta b^{k} +2 )^2\left(\frac{1}{b^{k}}\right)^2 \notag\\
& = & N\left(\delta +\frac{2}{b^k}\right)^2 = \gamma^k
\end{eqnarray*}
This implies
\begin{equation*}
\leb(A_{mk}) \leq \leb_{m} \leq \leb_0 \gamma^{mk} < \gamma^{mk}
\end{equation*}
Now, let $n \in \N$, take $m$ such that  $mk \leq n < (m+1)k$. Then 
\begin{equation*}
\leb(A_n) \leq \leb(A_{mk}) < \gamma^{mk}< \gamma^{n-k} = \tilde{C} \gamma^n,
\end{equation*}
for $\tilde{C} = \frac{1}{\gamma^k}$.
Thus the proof is finished.
\end{proof}

\begin{remark}\label{druga_czesc_wynikanie} 
In Lemma \ref{kwadraciki} if $b_n=b^n$ for $b \in \N, \ b >1$ and $\theta_n = 0$ for every $n$, then  in \eqref{oszmiara} we can take $\gamma = N \delta^2$.
\end{remark}
\begin{proof}
If $b_n=b^n$ for $b \in \N, \ b >1$ and $\theta_n = 0$ then squares from the grid do not stick out of $Q$, 
so their number in $(m+1)$th step satisfies:
 \begin{equation*}
 N_{m+1} \leq N_m \left(\delta b^k\right)^2  N
 \end{equation*}
 and thus 
 \begin{equation*}
 \frac{\leb_{m+1}}{\leb_m} \leq N \delta^2
 \end{equation*}
 which is exactly the measure of the covering of set $A$.
\end{proof}
Remark \ref{druga_czesc_wynikanie} will be used in the proof of the second part of Theorem B.


\section{Hausdorff dimension}
In this section we will prove, similarly to \cite{hunt} that the Hausdorff dimension of the graph of $f$ is equal to $D = 2+\frac{\log a}{\log b}$. 
  \begin{thmA}\label{theoremA}

Assume that $f(x) = \sum_{n=0}^\infty a_n g(b_nx+\theta_n)$ satisfies the following conditions:
\begin{enumerate}
\item $\left(a_n\right)_{n=0}^\infty$ is a sequence of  real independent random variables defined on some probabilistic space $(\Omega, \Prob)$ with uniform distribution on $(-a^n, a^n)$   for some $0<a<1$,
\item $\lim_{n\rightarrow \infty} \frac{b_{n+1}}{b_n} = b$ for some $b>1$, $ab> 1$,
\item   $\theta_n \in \R$ for $n \in \N$.
\item $g: \R \rightarrow \R$ is $C^1$ periodic and has  a finite number of critical points in every bounded interval.
\end{enumerate}
Then the Hausdorff and box dimension of the graph of $f$ are equal to:
\begin{equation*}
\dim_H \textnormal{graph} f = D = 2+\frac{\log a }{\log b}
\end{equation*}
almost surely.

 \end{thmA}

We can obviously assume that the period of $g$ is 1. In the estimations we consider the graph of $f$ over the interval $J=[0,1]$. 
\subsection{Upper bound}
We would like to calculate the lower box dimension of the graph of the function, which from the definition is:
\begin{equation*}
\underline{\dim_B} \ \textnormal{graph}(f) = \varliminf_{\epsilon\rightarrow 0} \frac{\log N(\epsilon)}{-\log\epsilon} 
\end{equation*}
where $N(\epsilon)$ denotes the minimal number of balls of radius $\epsilon$ which cover our set.
Fix $\epsilon >0$, let $n$ be the minimal number such that $\frac{1}{b_n}<\epsilon$. We will estimate the number $N(\frac{1}{b_n})$.
Let us divide the interval $J$ into intervals of length $\frac{1}{b_n}$ (the last interval may be shorter) and denote one of such intervals as $I$. Fix $x, y \in I$. We have $\left|x-y\right| \leq \frac{1}{b_n}$ and we obtain
\begin{equation}\label{gora}
\left| f(x) - f(y) \right| \leq L \left(\left|a_0\right| b_0 + \dots + \left| a_n \right|b_n\right) \left|x-y\right| + 2M \sum_{k=n+1}^\infty |a_k|
\end{equation}
where $L$ is the Lipschitz constant of $g$ and $M= \sup_{x\in J} |g(x)|$.
As $|a_k| \leq a^k$ we get 
\begin{equation}\label{ak}
\sum_{k=n+1}^\infty |a_k| \leq \sum_{k=n+1}^\infty a^k  = \frac{a^{n+1}}{1-a} 
\end{equation}
Fix  $\frac{1}{a} <b'<b$. Then $\frac{b_{n+1}}{b_n}>b'$ for every $n\geq n_0$ for some $n_0$ and
\begin{eqnarray*}
|a_0|b_0+\dots + |a_n|b_n \leq b_0 + ab_1+\dots +a^n b_n \leq c+ \frac{a^{n_0+1}b_n}{(b')^{n-n_0-1}}+\dots +\frac{a^{n-1}b_n}{b'}+a^nb_n\\
= c +a^n b_n \left(\frac{1}{(ab')^{n-n_0-1}}+\dots+ \frac{1}{ab'}+1\right) < c+ a^nb_n \frac{1}{1-\frac{1}{ab'}}
\end{eqnarray*}
where $c=  b_0 + ab_1+\dots +a^{n_0} b_{n_0}$.
Using this together with  \eqref{ak} and \eqref{gora} we obtain:
\begin{equation*}
\left| f(x) - f(y) \right| \leq \frac{Lc}{b_n}+\left( \frac{ab'}{ab'-1} + \frac{2Ma}{1-a}\right)a^n 
\end{equation*}

We have $b_n > c_1(b')^n> \frac{c_1}{a^n}$ for some $c_1$. 
Thus
\begin{equation*}
\left| f(x) - f(y)\right| \leq c_2 a^n.
\end{equation*}
Since we have at most $b_n+1$ intervals $I$, 
\begin{equation*}
N\left(\frac{1}{b_n}\right) = c_2 a^n b_n (b_n+1) = c_3a^n b_n^2.
\end{equation*}

Therefore, 
\begin{equation*}
\underline{\dim_B} \textnormal{graph} f = \varliminf_{n\rightarrow\infty}\frac{\log N\left(\frac{1}{b_n}\right)}{-\log\left(\frac{1}{b_n}\right)} 
=\varliminf_{n\rightarrow\infty}\frac{\log c_3a^n b_n^2}{\log b_n} \leq 2 + \lim_{n\rightarrow \infty} \frac{n\log a}{\log b_n} = 2 + \frac{\log a}{\log b}
\end{equation*}
which, as $\dim_H \textnormal{graph} f \leq \underline{\dim_B} \textnormal{graph} f $ concludes the proof of this case.

\subsection{Lower bound}
In the proof we follow a method used in \cite{hunt}.
We will use potential theory methods and estimates of the  $t$-energy of the measure $\nu$, which by definition is equal to
\newcommand{\itt}{I_t}
\begin{equation}\label{tenergia}
\itt(\nu) = \iint_{J \times J} \frac{d\nu(x)d\nu(y)}{|x-y|^t}.
\end{equation}
 As $\dim_H(A) = \inf \{t : \itt(\nu)<\infty \textnormal{ for some measure }\nu\textnormal{ supported on } A\}$, if we show that  \eqref{tenergia} is finite for some $t$, we obtain that the Hausdorff dimension is greater than $t$. Choosing a sequence of $t$s approaching $D$, we will get our result --- $\dim_H\textnormal{graph} f \geq D$.

Let $\nu$ be the Lebesgue measure lifted to the graph of $f$. We obtain:
 
 \begin{equation}\label{energiam}
 \itt(\nu) = \iint_{J\times J} \frac{dxdy}{\left((x-y)^2+(f(x)-f(y))^2\right)^\frac{t}{2}}
 \end{equation}
 
 Let us fix $t\in (1,D)$. To show that \eqref{energiam} is finite almost surely we will show that 
 \begin{equation}\label{et}
E_t =  \int_\Omega \itt(\nu) d\Prob= \int_\Omega \iint_{J\times J} \frac{dxdy}{\left((x-y)^2+(f(x)-f(y))^2\right)^\frac{t}{2}}d\Prob
 \end{equation}
 
 is finite. By the Fubini theorem:
 \begin{equation*}
E_t =  \intj\intj \int_\Omega  \frac{1}{\left((x-y)^2+(f(x)-f(y))^2\right)^\frac{t}{2}} d\Prob dx dy
\end{equation*}
 
 
 
 Now, let $z_{x,y} = f(x) - f(y)$ for some $x, y\in J$. 
 As $z_{x,y}$ is a sum of independent random variables we may write its density $h_{x,y}$ as an infinite convolution  $h_{x,y} = h^{(0)}_{x,y} \ast h^{(1)}_{x,y} \ast \dots$ of densities:
 \begin{equation*}
h^{(n)}_{x,y} = \frac{\mathbbm{1}_{\left[-a^n\left|g(b_n x +\theta_n) -g(b_n y +\theta_n)\right|, a^n\left|g(b_n x +\theta_n) -g(b_n y +\theta_n)\right|\right]}}{2a^n\left|g(b_n x +\theta_n) -g(b_n y +\theta_n)\right|}.
\end{equation*}
 Furthermore,

 \begin{eqnarray}\label{et2}
 \int_\Omega  \frac{d\Prob}{\left((x-y)^2+(f(x)-f(y))^2\right)^\frac{t}{2}} & =&    \inty \frac{h_{x,y}(s)ds}{\left((x-y)^2+s^2\right)^\frac{t}{2}}\\
& =& \inty \frac{h_{x,y}(\left|x-y\right|w)|x-y|dw}{\left| x-y \right|^t \left(1+w^2\right)^\frac{t}{2}} \notag\\
&\leq & C  \frac{\sup h_{x,y}}{|x-y|^{t-1}} \notag
 \end{eqnarray}
for some $C>0$, because $t>1$.
Fix $\epsilon>0$. 
\begin{defi}\label{zbioran} For $n\geq 0$ define  the sets
$$\aneps = \{(x,y)\in J \times J: \deltag \geq\epsilon\}$$
\end{defi}

Let us define the set $B_n$:
\begin{equation*}
 B_n= A_0^c \cap A_1^c \cap \dots \cap A_{n-1}^c \cap A_n.
 \end{equation*}
 
 We can see that $J \times J =  \bigcup_{n} B_n \cup C$, where $C = \left(J \times J\right) \setminus \bigcup_{n}B_n$.

 Take a small $\epsilon > 0$ and set $A = \left(J\times J\right) \cap A_0^c$, $\underline{\theta_n} = \left(\theta_n, \theta_n\right)$.  Then the set $\A_n (\underline{\Theta})$ from the Definition \ref{22} is equal to $A_0^c \cap \dots \cap \aneps^c$. Applying Lemma \ref{properties_g} and Lemma \ref{kwadraciki} we obtain 
 \begin{equation*}
 \leb( A_0^c \cap \dots \cap \aneps^c) < \tilde{C} \gamma^n
 \end{equation*}
 for $\gamma < \frac{1}{b} + \epsilon_0$ where $\epsilon_0$ is arbitrarily small for small $\epsilon$. 
 Since $C = \left(J \times J \right)\cap \bigcup_{n=0}^\infty A_n^c$ this implies the following lemma:
 \begin{lemat}\label{32}
$ \leb(C) = 0.$
 \end{lemat}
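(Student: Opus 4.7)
The plan is to observe that this lemma is essentially a direct corollary of the combined estimates in Lemmas \ref{properties_g} and \ref{kwadraciki}, which were already applied in the paragraph immediately preceding the statement. The only real content is a set-theoretic identification followed by passing to the limit in the exponential bound.

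First I would unpack the definition of $C$. Since the sets $B_n$ are pairwise disjoint and $B_n$ consists exactly of those $(x,y)$ for which $n$ is the smallest index with $(x,y)\in A_n$, we have $\bigcup_{n\geq 0} B_n = \bigcup_{n\geq 0} A_n$. Consequently
\begin{equation*}
C = (J\times J) \setminus \bigcup_{n=0}^\infty B_n = (J\times J) \cap \bigcap_{n=0}^\infty A_n^c.
\end{equation*}
In particular, for every $n\in\N$,
\begin{equation*}
C \subseteq A_0^c \cap A_1^c \cap \dots \cap A_n^c.
\end{equation*}

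Next I would match this intersection with $\A_n(\underline{\Theta})$ from Definition \ref{22}, exactly as was done in the discussion preceding the lemma: take $A = (J\times J)\cap A_0^c$ and $\underline{\theta_n} = (\theta_n,\theta_n)$, so that $A_0^c\cap\dots\cap A_n^c = \A_n(\underline{\Theta})$. Lemma \ref{properties_g} supplies, for a given $\epsilon>0$, a $\delta = \delta(\epsilon)\to 0$ and a cover of $A$ by $N\le C/\delta$ axis-aligned squares of side $\delta$, which is precisely the hypothesis of Lemma \ref{kwadraciki}. Applying the latter yields $\tilde C>0$ and $\gamma<1$ (depending on $\delta$) such that
\begin{equation*}
\leb(A_0^c\cap\dots\cap A_n^c) = \leb(\A_n(\underline{\Theta})) < \tilde C\,\gamma^n
\end{equation*}
for every $n$.

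Finally, since $C$ is contained in each of these intersections, $\leb(C) \le \tilde C\gamma^n$ for every $n$; letting $n\to\infty$ (using $\gamma<1$) gives $\leb(C)=0$. There is no substantive obstacle here: the only thing one must be careful about is the set-theoretic identification $\bigcup_n B_n = \bigcup_n A_n$, after which the lemma is an immediate consequence of the geometric/measure-theoretic estimate already established in Lemma \ref{kwadraciki} together with the covering property guaranteed by Lemma \ref{properties_g}.
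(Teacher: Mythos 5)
Your argument is correct and is essentially identical to the paper's own reasoning: the identification $C=(J\times J)\cap\bigcap_{n}A_n^c$, the reduction to $\A_n(\underline{\Theta})$ via Lemmas \ref{properties_g} and \ref{kwadraciki}, and the passage to the limit in $\tilde C\gamma^n$ are exactly the steps carried out in the paragraph preceding the lemma. (You even silently correct the paper's typo, which writes $\bigcup_{n}A_n^c$ where $\bigcap_{n}A_n^c$ is meant.)
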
 

Moreover, 
\begin{equation} \label{oszbn}
\leb(B_n) \leq \leb(A_0^c\cap A_1^c \cap \dots \cap A_{n-1}^c)< \tilde{C}  \gamma^{n-1}.
\end{equation}

Take $(x, y) \in B_n$. We have 
\begin{equation*}
\epsilon \leq \left| g(b_nx+\theta_n) - g(b_ny+\theta_n) \right| {\leq} Lb_n\left| x-y \right|
\end{equation*}
 where $L$ is a Lipschitz constant of $g$. 
Since $h_{x,y} $ is the convolution  of $h^{(n)}_{x,y}$ we have
\begin{equation*}
\sup_{B_n} h_{x,y} \leq \sup h^{(n)}_{x,y}\leq \frac{1}{2a^n\epsilon}
\end{equation*}
On the other hand, taking $b' > b$ arbitrarily close to $b$, we obtain $b_n \leq c (b')^n$ for some $c>0$ and

\begin{equation*}
|x-y|^{1-t} \leq \left(\frac{\epsilon}{Lb_n}\right)^{1-t} \leq \left(\frac{\epsilon}{L(b')^n}\right)^{1-t}
\end{equation*}

 
By this and \eqref{et} and \eqref{et2},
 
 \begin{eqnarray*}
 E_t
 &\leq &  C \sum_n \int_{B_n} \left| x-y\right|^{1-t} \sup h^{(n)}_{x,y} dxdy\\
 &\leq &  C \sum_n \int_{B_n} \left(\frac{\epsilon}{Lc (b')^n}\right)^{1-t} \frac{1}{2a^n \epsilon}dxdy = C_1 \sum_n \int_{B_n}\frac{1}{a^n (b')^{n(1-t)}}dxdy\\
 &\leq & C_2 \sum_n \frac{\leb(B_n)}{(a(b')^{1-t})^n}\leq C_3 \sum_n \left(\frac{\gamma}{a(b')^{1-t}}\right)^n <\infty
 \end{eqnarray*}
 if only $\gamma < a(b')^{1-t}$. The last step is to check this condition.
 
Since $\gamma < \frac{1}{b}+\epsilon_0 $ for arbitrarily small $\epsilon_0$ and $b'$ can be chosen arbitrarily close to $b$ it is sufficient to check that 
\begin{equation}\label{ostgamma}
\frac{1}{b}< ab^{1-t}
\end{equation}
which holds because $t < D$. Hence we obtain the finiteness of \eqref{et}, which concludes this case.
\section{Proof of Theorem B}
We state {\bf Theorem B} once again:

  \begin{thmB}\label{theoremB}
 Let $f(x)= \szereg a_n g(b_nx+\theta_n)$ for $g: \R \rightarrow \R$, $T$-periodic, $C^1$ with a finite number of critical points in every bounded interval,  satisfy the following conditions:
 \begin{enumerate}
\item  $(a_n)_{n=0}^\infty$ is a sequence of independent random variables defined on some probabilistic space $(\Omega, \Prob)$ with uniform distribution on $(-a^n, a^n)$, $0<a<1$, 
\item $(b_n)_{n=0}^\infty$, there exists $b>1$ such that  $\frac{b_{n+1}}{b_n}\geq b$ for all $n>0$.
\item $a^2b> 1$, \label{ass3}
\item $\theta_n \in \R$ .
\end{enumerate}
Then the occupation measure for the function $f$ is absolutely continuous with $L^2$ density almost surely. 

Moreover, if $b_n=b^n$, $b \in \N, \ b>1$ and $\theta_n=0$ for every $n$, then the assumption \eqref{ass3} may be replaced by $ab> 1$. 
 \end{thmB}
 \begin{proof}
 In the  the proof we will use methods used by  Peres and Solomyak, see e. g. \cite{peres1996absolute}. 

We would like to prove that $||\mu||_2<\infty$ almost surely. By the Parseval formula it is sufficient to prove that $||\muf||_2 < \infty$ almost surely, where $\muf$ is the Fourier transform of the measure $\mu$. 
As previously we can assume $T=1$ and consider the graph over the  interval $J=[0,1]$.

The Fourier transform of $\mu$ is defined as
$$\muf (u) = \inty e^{iut}d\mu(t) =\int_0^1 e^{iuf(x)}dx,$$
for $u \in \R$. We have
\begin{eqnarray}\label{definicja_muf}
||\muf||_2^2 & = &\inty|\muf(u)|^2du = \inty \muf(u)\overline{\muf(u)}du=\inty\intj e^{iuf(x)}dx \intj e^{-iuf(y)}dydu\notag\\ 
&=&  \inty\intj\intj e^{iu(f(x)-f(y))}dxdydu
\end{eqnarray}
We will integrate this expression over the probabilistic space $\Omega$. 
\begin{eqnarray*}\label{definicja_i}
I&=&\into||\muf||_2^2d\Prob=\into\inty|\muf(u)|^2dud\Prob = \into\inty\intj\intj e^{iu(f(x)-f(y))}dxdydu\Prob\\
& = &\lim_{u_0\rightarrow \infty} \into \int_{-u_0}^{u_0}\intj\intj e^{iu(f(x)-f(y))}dxdydu
=\lim_{u_0\rightarrow \infty} I_{u_0}
\end{eqnarray*}

If $I$ is finite, the integral \eqref{definicja_muf} (so that our norm) is also finite almost surely. 

Using the Fubini theorem in $I_{u_0}$ we may change the integration order and get:
$$I_{u_0}= \int_{-u_0}^{u_0}\intj\intj\into e^{iu(f(x)-f(y))}d\Prob dxdydu.$$

Let us denote  $$Z_n=a_n(g(b_nx+\theta_n)-g(b_ny+\theta_n)),$$
 where $(Z_n)_{n=0}^\infty$ - independent random variables with uniform distribution on  $(-\alpha_n, \alpha_n)$ for 
 $$\alpha_n=a^n(g(b_nx+\theta_n)-g(b_ny+\theta_n)).$$ 
 
 Since $f(x)$ is a series of independent random variables we obtain
\begin{eqnarray*}
I_{u_0}  & = & \int_{-u_0}^{u_0}\intj\intj \into e^{iu(f(x)-f(y))}d\Prob dxdydu  = \int_{-u_0}^{u_0}\intj\intj\iloczyn\into e^{iuZ_n}d\Prob dxdydu \\
& = & \int_{-u_0}^{u_0}\intj\intj\iloczyn \int_{-\alpha_n}^{\alpha_n}\frac{1}{2\alpha_n} e^{iut}dtdxdydu 
= \int_{-u_0}^{u_0}\intj\intj\iloczyn \frac{e^{iu\anp}-e^{-iu\anp}}{ 2i\alpha_n u}dxdydu\\
& =& \int_{-u_0}^{u_0} \intj\intj \iloczyn \frac{\sin(u\anp)}{u\anp}dx dy du .
\end{eqnarray*}

Now, let us denote 
$$x_n=\anp u = a^n(g(b_nx+\theta_n)-g(b_ny+\theta_n))u.$$
Then
\begin{equation}\label{sinxn}
I_{u_0} = \int_{-u_0}^{u_0} \intj\intj \iloczyn \frac{\sin(x_n)}{x_n}dx dy du 
\end{equation}
To conclude the proof it is sufficient to show that the integral \eqref{sinxn} is finite and the estimations are independent from $u_0$. This will be done in the following proposition.
\begin{prop}\label{main_fact}
Let $x_n = a^n(g(b_nx+\theta_n)-g(b_ny+\theta_n))u$ for some $u \in \R$, where $x,y \in [0,1]$ and $a, b_n, g$ are defined in Theorem B.
Then
\begin{equation*}
 \inty \intj\intj \iloczyn \left|\frac{\sin(x_n)}{x_n}\right| dx dy du < \infty.
  \end{equation*}
\end{prop}
\end{proof}

\section{Proof of Proposition \ref{main_fact}}

Fix $M>0$. We can divide our integral into three parts, which will be estimated separately.
\begin{eqnarray*}
& & \inty \intj\intj \iloczyn \left|\frac{\sin(x_n)}{x_n}\right|dx dy du \\
&=&  \int_{-M}^{M}\intj\intj \iloczyn \left|\frac{\sin(x_n)}{x_n}\right|dx dy du 
 + \int_{-\infty}^{-M}\intj\intj \iloczyn\left| \frac{\sin(x_n)}{x_n}\right|dx dy du\\
& + & \int_M^{\infty}\intj\intj \iloczyn \left|\frac{\sin(x_n)}{x_n}\right|dx dy du  \\
 & = & I_1+I_2+I_3\\
\end{eqnarray*}
Using the fact that  $\left|\frac{\sin x}{x} \right| \leq 1$:
 \begin{eqnarray*}
\left|I_1\right| & \leq &  \int_{-M}^{M}\intj\intj \iloczyn \left|\frac{\sin(x_n)}{x_n}\right|dx dy du \leq  \leb (J^2)  \int_{-M}^M 1 du\\
&\leq &\leb(J^2) 2M <\infty
\end{eqnarray*}

As $I_2$ and $I_3$ can be estimated in the same way, we will estimate only $I_2$.

Fix $\epsilon>0$.
Consider the set
$$\aneps = \{(x,y)\in J \times J: \deltag \geq \epsilon\}$$
as in Definition \ref{zbioran}.

\begin{stw} \label{3.1}
\begin{equation*}
 \iint_{\anoeps\cap \anieps}\iloczyn \left|\frac{\sin x_n}{x_n}\right| dxdy
\leq \frac{\leb(\anoeps \cap \anieps)}{a^{n_0}a^{n_1} u^2 \epsilon^2}, \ \textnormal{ for } n_0\neq n_1\geq 0 
\end{equation*}
\end{stw}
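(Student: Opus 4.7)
The plan is to exploit the elementary bound $|\sin t / t| \le \min(1, 1/|t|)$ and pick out precisely the two factors of the infinite product $\prod_n |\sin x_n / x_n|$ whose arguments are guaranteed to be large on the domain of integration, namely the $n_0$-th and $n_1$-th. Every remaining factor is estimated by the trivial bound $|\sin x_n / x_n| \le 1$.

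The inputs are immediate from Definition \ref{zbioran}. On $\anoeps$ one has $|g(b_{n_0}x + \theta_{n_0}) - g(b_{n_0}y + \theta_{n_0})| \ge \epsilon$, hence
\begin{equation*}
|x_{n_0}| \;=\; a^{n_0}\,|g(b_{n_0}x+\theta_{n_0}) - g(b_{n_0}y+\theta_{n_0})|\,|u| \;\ge\; a^{n_0} \epsilon\, |u|,
\end{equation*}
and the analogous lower bound $|x_{n_1}| \ge a^{n_1} \epsilon |u|$ holds on $\anieps$. Combining these with $|\sin x_n / x_n| \le 1/|x_n|$ for $n \in \{n_0, n_1\}$ and $|\sin x_n / x_n| \le 1$ for all other $n$, we obtain the pointwise estimate
\begin{equation*}
\prod_{n=0}^{\infty} \left|\frac{\sin x_n}{x_n}\right| \;\le\; \frac{1}{|x_{n_0}|\,|x_{n_1}|} \;\le\; \frac{1}{a^{n_0}a^{n_1} \epsilon^2 u^2}
\end{equation*}
on $\anoeps \cap \anieps$. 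Integrating this constant bound over $\anoeps \cap \anieps$ yields the claim directly.

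There is essentially no obstacle here; the assumption $n_0 \ne n_1$ is used only to ensure that the two factors we single out are genuinely distinct entries of the infinite product, so that both of the small upper bounds can be applied simultaneously. The same scheme will presumably be iterated (or supplemented with further $A_n$'s) when it is combined with Lemma \ref{kwadraciki} to bound $I_2$, but at the level of this intermediate fact the argument is purely pointwise.
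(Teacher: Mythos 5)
Your argument is correct and coincides with the paper's own proof: both single out the factors indexed by $n_0$ and $n_1$, bound them by $\left|\frac{1}{\epsilon u a^{n_i}}\right|$ using the definition of $\aneps$ together with $|\sin x_{n_i}|\leq 1$, estimate all remaining factors by $1$, and integrate the resulting constant over $\anoeps\cap\anieps$. Nothing further is needed.
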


\begin{proof}
\begin{eqnarray*}
 \iint_{\anoeps \cap \anieps} \ \iloczyn \ \left| \frac{\sin x_n}{x_n} \right| dx dy 
 =  \iint_{\anoeps\cap \anieps} \left| \frac{\sin x_{n_0}}{x_{n_0}} \frac{\sin x_{n_1}}{x_{n_1}} \right| \prod_{n \in \mathbb{N}\setminus\{n_0,n_1\} } \left|\frac{\sin x_n}{x_n}\right|dx dy \\
\end{eqnarray*}
Since we can estimate  $\prod_{n \in \mathbb{N}\setminus\{n_0,n_1\} } \left|\frac{\sin x_n}{x_n} \right|\leq 1$ and $\left|\sin x_{n_{i}}\right|\leq 1$, $i = 0,1$ and use the definition of the set $\anoeps \cap \anieps$,
we obtain ${\displaystyle \left|\frac{\sin x_{n_{i}}}{x_{n_{i}}}\right| \leq \left|\frac{1}{\epsilon u a^{n_{i}}}\right|}$, so that 
\begin{eqnarray*}
\lefteqn{\iint_{\anoeps\cap \anieps} \left| \frac{\sin x_{n_0}}{x_{n_0}} \frac{\sin x_{n_1}}{x_{n_1}}\right| \prod_{n \in \mathbb{N}\setminus\{n_0,n_1\} } \left|\frac{\sin x_n}{x_n} \right|dxdy  }\\
& \leq  &  \iint_{\anoeps\cap \anieps} \left| \frac{1}{\epsilon^2 u^2 a^{n_0+n_1}}\right|dx dy \\
&= & \frac{\leb(\anoeps\cap \anieps)}{\epsilon^2 u^2 a^{n_0+n_1}} \\
\end{eqnarray*}
\end{proof}

\begin {defi} For $0 \leq n_0 < n_1$ define
$$\bnn= A_{0}^c\cap  A_{1}^c\cap  \dots \cap A_{n_0-1}^c\cap A_{n_0} \cap A_{n_0+1}^c \cap \dots \cap  A_{n_1-1}^c \cap A_{n_1}$$
\end{defi}
It means that $\bnn$ is a set in which the condition  $|g(b_nx+\theta_n) - g(b_ny+\theta_n)|{\geq}\epsilon$ hold the first time for $n_0$ and the next time for $n_1$.  

Now we divide $J \times J = \bigcup_{n_0} \bigcup_{n_1> n_0} \bnn \cup C$, where $C = \left(J \times J\right) \setminus \bigcup_{n_0} \bigcup_{n_1> n_0}\bnn$.
We would like to prove the following lemma:
\begin{lemat} $\leb(C)=0$. 
\end{lemat}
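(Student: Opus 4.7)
The plan is to classify points of $C$ by how many indices $n$ satisfy $(x,y)\in A_n$. A pair $(x,y)\in J\times J$ lies in $\bigcup_{n_0<n_1}\bnn$ exactly when the condition $|g(b_n x+\theta_n)-g(b_n y+\theta_n)|\geq \epsilon$ holds for at least two values of $n$ (namely its first two occurrences $n_0<n_1$). Hence
\[
C \;=\; C^{\varnothing} \;\cup\; \bigcup_{k\geq 0} C^{\{k\}},
\]
where $C^{\varnothing}=\bigcap_{n\geq 0}A_n^c$ (the condition never holds) and $C^{\{k\}}=A_k\cap\bigcap_{n\neq k}A_n^c$ (it holds exactly at $n=k$). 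Since this is a countable decomposition, it suffices to show that each piece has Lebesgue measure zero.

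The set $C^{\varnothing}$ is handled by an argument already sketched in Section 3 for Lemma \ref{32}: Lemma \ref{properties_g} supplies the covering hypothesis for $A=(J\times J)\cap A_0^c$ required by Lemma \ref{kwadraciki}, which then yields $\leb(\mathcal{A}_n(\underline{\Theta}))\leq \tilde{C}\gamma^n$ with $\gamma<1$, so $\leb(C^{\varnothing})=0$.

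For each $C^{\{k\}}$ the inclusion $C^{\{k\}}\subseteq \bigcap_{n>k}A_n^c$ reduces the task to proving $\leb\bigl(\bigcap_{n>k}A_n^c\bigr)=0$ for every $k\geq 0$. I would re-run the proof of Lemma \ref{kwadraciki} with indices shifted by $k+1$: after the affine substitution $u=b_{k+1}x+\theta_{k+1}$, $v=b_{k+1}y+\theta_{k+1}$, the set $A_{k+1+j}$ becomes an analogous set $\tilde{A}_j$ built from the same function $g$ with the renormalized sequence $\tilde{b}_j=b_{k+1+j}/b_{k+1}$ (so that $\tilde{b}_0=1$ and the successive ratios remain $\geq b$) and suitably translated real phases $\tilde{\theta}_j=\theta_{k+1+j}-\tilde{b}_j\theta_{k+1}$. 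The image of $J\times J$ under this substitution is a square of side $b_{k+1}$; since the relevant intersection is $1$-periodic in $(u,v)$, covering this image by finitely many unit squares and applying Lemma \ref{kwadraciki} on each of them yields a bound of the form $\tilde{C}_k\gamma^L$ after $L$ tail indices. Letting $L\to\infty$ and undoing the substitution (whose Jacobian is $b_{k+1}^2$) gives $\leb\bigl(\bigcap_{n>k}A_n^c\bigr)=0$. Combining the two cases, $\leb(C)=0$.

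The only real obstacle is the shift-and-rescale bookkeeping — one has to verify that $(\tilde{b}_j)$ and $(\tilde{\theta}_j)$ satisfy the hypotheses of Lemma \ref{kwadraciki} verbatim and that the $1$-periodicity is preserved so that covering the enlarged square by unit cells does not lose the geometric decay — but no new analytic input beyond Lemmas \ref{properties_g} and \ref{kwadraciki} is required.
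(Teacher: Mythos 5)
Your decomposition of $C$ into $C^{\varnothing}=\bigcap_{n}A_n^c$ and the sets $C^{\{k\}}$ on which the condition holds for exactly one index is exactly the right idea, and it is essentially the argument the paper intends: the proof is explicitly left to the reader as ``similar to the proof of Lemma \ref{32}'', and your two cases are precisely that lemma together with a one-index-deleted variant of it. The one inaccurate step is in your treatment of $C^{\{k\}}$: after the substitution $u=b_{k+1}x+\theta_{k+1}$, $v=b_{k+1}y+\theta_{k+1}$, the intersection $\bigcap_j\tilde A_j^c$ is \emph{not} $1$-periodic in $(u,v)$ unless every ratio $b_{k+1+j}/b_{k+1}$ is an integer (each $\tilde A_j^c$ has period $1/\tilde b_j$ in each variable, and these periods need not be commensurable), so covering the image square by unit cells does not literally reduce matters to Lemma \ref{kwadraciki} on $[0,1]^2$. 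This is repairable, because the integer translations of the unit cells can be absorbed into the phases: Definition \ref{22} allows $\underline{\theta_j}\in\R^2$ and Lemma \ref{kwadraciki} is uniform in $\underline\Theta$. But it is cleaner to avoid the rescaling by using the inclusion $C^{\{k\}}\subseteq A_0^c\cap\dots\cap A_{k-1}^c\cap A_{k+1}^c\cap\dots\cap A_N^c$ (keeping the head, not just the tail) together with the re-indexing device the paper itself uses in Lemma \ref{glowny}: set $\tilde b_n=b_n$, $\tilde\theta_n=\theta_n$ for $n<k$ and $\tilde b_n=b_{n+1}$, $\tilde\theta_n=\theta_{n+1}$ for $n\ge k$, so that the displayed intersection is exactly $\A_{N-1}(\underline\Theta)$ for the re-indexed data and Lemma \ref{kwadraciki} gives measure at most $\tilde C\gamma^{N-1}\to 0$ as $N\to\infty$; for $k=0$ your normalization by $b_1$ is the natural substitute for the missing head. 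With either repair the proof is complete, and, as you say, no analytic input beyond Lemmas \ref{properties_g} and \ref{kwadraciki} is required.
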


\begin{proof}
The proof is similar to the proof of Lemma \ref{32} and is left to the reader.
\end{proof}

\begin{obs} $$\iint_{\bnn} \iloczyn \left|    \frac{\sin x_n}{x_n}  \right|dx dy  \leq \frac{\leb(\bnn)}{\epsilon^2 u^2 a^{n_0}a^{n_1}}$$
\end{obs}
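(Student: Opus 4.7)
The plan is to reuse the argument of Fact \ref{3.1} almost verbatim, observing that the restriction set $\bnn$ is contained in $\anoeps \cap \anieps$. By definition, on $\bnn$ the conditions $(x,y)\in A_{n_0}$ and $(x,y)\in A_{n_1}$ hold, so
\begin{equation*}
|g(b_{n_0}x+\theta_{n_0})-g(b_{n_0}y+\theta_{n_0})|\geq \epsilon \quad\text{and}\quad |g(b_{n_1}x+\theta_{n_1})-g(b_{n_1}y+\theta_{n_1})|\geq \epsilon,
\end{equation*}
which translates into $|x_{n_0}|\geq \epsilon|u| a^{n_0}$ and $|x_{n_1}|\geq \epsilon|u| a^{n_1}$.

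Next I would split the infinite product by isolating the two distinguished factors:
\begin{equation*}
\iloczyn \left|\frac{\sin x_n}{x_n}\right| = \left|\frac{\sin x_{n_0}}{x_{n_0}}\right|\left|\frac{\sin x_{n_1}}{x_{n_1}}\right|\prod_{n\neq n_0,n_1}\left|\frac{\sin x_n}{x_n}\right|.
\end{equation*}
For the two isolated factors I use $|\sin z|\leq 1$ and the lower bounds on $|x_{n_0}|, |x_{n_1}|$ just derived, which gives
\begin{equation*}
\left|\frac{\sin x_{n_i}}{x_{n_i}}\right|\leq \frac{1}{\epsilon |u| a^{n_i}}, \quad i=0,1.
\end{equation*}
All remaining factors are bounded by $1$ via $|\sin z/z|\leq 1$.

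Putting these estimates together yields pointwise on $\bnn$
\begin{equation*}
\iloczyn \left|\frac{\sin x_n}{x_n}\right|\leq \frac{1}{\epsilon^2 u^2 a^{n_0} a^{n_1}},
\end{equation*}
and integrating this constant bound over $\bnn$ produces the claimed inequality. There is no genuine obstacle here; the only thing to be careful about is that the same $\epsilon$ is used to define every $A_n$, so that the two lower bounds on $|x_{n_0}|$ and $|x_{n_1}|$ share the common constant $\epsilon$. This is automatic from Definition \ref{zbioran} and the construction of $\bnn$.
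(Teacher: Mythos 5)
Your proof is correct and follows essentially the same route as the paper, which simply notes that the bound follows from Fact \ref{3.1} because $\bnn\subseteq \anoeps\cap\anieps$; you have merely spelled out the pointwise estimate that Fact \ref{3.1} already establishes and integrated it over the smaller set.
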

\begin{proof}
It easily follows from fact \ref{3.1}.
\end{proof}

\begin{obs} $$\iint _{J \times J}   \iloczyn \left|    \frac{\sin x_n}{x_n} \right|dx dy
 \leq \frac{1}{\epsilon^2u^2}\sum_{n_0} \sum_{n_1> n_0} \frac{\mathcal{L}(\bnn)}{a^{n_0}a^{n_1}}$$
\end{obs}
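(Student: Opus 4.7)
The statement is essentially the passage from the local bound on each piece $\bnn$ to a global bound on $J\times J$, so the plan is to invoke countable additivity of the integral with respect to the disjoint decomposition established just above, and then apply the previous observation termwise.

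The setup is already in place: the previous lemma gives $\leb(C)=0$, so the integral over $C$ of any nonnegative measurable function vanishes, and the remaining pieces $\{\bnn\}_{0\le n_0<n_1}$ form a countable family of pairwise disjoint measurable subsets of $J\times J$ whose union (together with $C$) exhausts $J\times J$. First I would note that the integrand $\prod_{n=0}^\infty |\sin(x_n)/x_n|$ is nonnegative (being a product of nonnegative factors) and measurable in $(x,y)$, so by countable additivity of the integral (equivalently, by monotone convergence applied to partial unions)
\begin{equation*}
\iint_{J\times J}\iloczyn\left|\frac{\sin x_n}{x_n}\right|dx\,dy \;=\;\sum_{n_0=0}^\infty\sum_{n_1>n_0}\iint_{\bnn}\iloczyn\left|\frac{\sin x_n}{x_n}\right|dx\,dy.
\end{equation*}

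Next I would apply the preceding observation, which bounds each summand by $\leb(\bnn)/(\epsilon^2 u^2 a^{n_0}a^{n_1})$, and factor out the constant $1/(\epsilon^2 u^2)$:
\begin{equation*}
\sum_{n_0}\sum_{n_1>n_0}\iint_{\bnn}\iloczyn\left|\frac{\sin x_n}{x_n}\right|dx\,dy\;\le\;\frac{1}{\epsilon^2 u^2}\sum_{n_0}\sum_{n_1>n_0}\frac{\leb(\bnn)}{a^{n_0}a^{n_1}},
\end{equation*}
which is exactly the desired bound.

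There is no real obstacle here: the content of the estimate is entirely contained in Fact \ref{3.1} (already used in the preceding observation) and in the measure-zero statement for $C$. The only caveat is to be sure that the termwise application of the previous observation is legitimate, i.e.\ that each $\bnn$ is measurable and the series of nonnegative terms is well-defined (possibly $+\infty$, but the subsequent part of the paper will show it is finite for $|u|\ge M$ large enough under the assumption $a^2b>1$). No interchange of integration and summation that would require dominated convergence is needed, since Tonelli suffices for nonnegative integrands.
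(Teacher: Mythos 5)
Your proposal matches the paper's own proof: it decomposes $J\times J$ into the disjoint sets $\bnn$ together with the null set $C$, discards $C$, sums the integrals over the $\bnn$, and applies the preceding observation termwise. The reasoning is correct and takes essentially the same route.
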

\begin{proof}
	\begin{eqnarray*}
	\lefteqn{\iint _{J \times J}   \iloczyn \left|   \frac{\sin x_n}{x_n}\right|    dx dy= \iint_{\bigcup_{n_0}\bigcup_{n_1> n_0}\bnn \cup C} \iloczyn \left|    \frac{\sin x_n}{x_n}\right|   dx dy} \\
	& = &    \iint _{\bigcup_{n_0}\bigcup_{n_1> n_0} \bnn} \iloczyn \left|    \frac{\sin x_n}{x_n}\right|   dx dy\\
 	& \leq & \sum_{n_0}\sum_{n_1 > n_0} \iint _{\bnn} \iloczyn \left| \frac{\sin x_n}{x_n}\right| dx dy \\
	& \leq & \sum_{n_0}\sum_{n_1 > n_0} \frac{\leb(\bnn)}{\epsilon^2 u^2 a^{n_0} a^{n_1}} 
	\end{eqnarray*}
\end{proof}

To complete the proof we now need to show the following lemma:
\begin{lemat}\label{glowny}
We have
\begin{equation}\label{szereg_bn}
\sum_{n_0}\sum_{n_1 > n_0} \frac{\leb(\bnn)}{a^{n_0}a^{n_1}} < \infty. 
\end{equation}
\end{lemat}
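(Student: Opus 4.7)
My approach would be to bound $\leb(\bnn)$ by applying Lemma \ref{kwadraciki} to the subsequence of $(b_n)$ with the index $n_0$ deleted, and then to recognize \eqref{szereg_bn} as (essentially) a product of two geometric series.

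First I would note that
\[
\bnn\ \subseteq\ \bigcap_{j\in S} A_j^c, \qquad S=\{0,1,\ldots,n_1-1\}\setminus\{n_0\}.
\]
To apply Lemma \ref{kwadraciki} to this intersection, consider the subsequence $(\tilde b_k)$ obtained from $(b_n)$ by removing $b_{n_0}$ (with the analogous relabelling of shifts). The crucial observation is that the consecutive-ratio hypothesis $\tilde b_{k+1}/\tilde b_k\geq b$ is preserved: ratios away from the deletion point are unchanged, while the single new ratio equals $b_{n_0+1}/b_{n_0-1}=(b_{n_0+1}/b_{n_0})(b_{n_0}/b_{n_0-1})\geq b^2\geq b$. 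Coupled with Lemma \ref{properties_g} applied to $A=\{(x,y)\in[0,1]^2:|g(x)-g(y)|<\epsilon\}$, this yields
\[
\leb(\bnn)\ \leq\ \tilde C\,\gamma^{n_1-2}, \qquad \gamma<\tfrac{1}{b}+\epsilon_0,
\]
with $\epsilon_0$ arbitrarily small for $\epsilon$ small; the edge case $n_0=0$ is handled by a routine rescaling that makes the leading element of $(\tilde b_k)$ equal to $1$.

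Substituting this bound into \eqref{szereg_bn} and setting $m=n_1-n_0\geq 1$ gives
\[
\sum_{n_0\geq 0}\sum_{n_1>n_0}\frac{\leb(\bnn)}{a^{n_0+n_1}}\ \leq\ \frac{\tilde C}{\gamma^2}\left(\sum_{n_0\geq 0}\left(\frac{\gamma}{a^2}\right)^{n_0}\right)\!\left(\sum_{m\geq 1}\left(\frac{\gamma}{a}\right)^m\right).
\]
Both factors are finite iff $\gamma<a^2$, which (since $0<a<1$) is the stronger of the two needed conditions $\gamma<a,\ \gamma<a^2$. By assumption \eqref{ass3}, $a^2b>1$, i.e. $1/b<a^2$; choosing $\epsilon$ small enough that $1/b+\epsilon_0<a^2$ forces $\gamma<a^2$ and finiteness follows.

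For the moreover-part of Theorem B ($b_n=b^n$, $\theta_n\equiv 0$, $b\in\N$), Remark \ref{druga_czesc_wynikanie} replaces the bound on $\gamma$ by $\gamma=N\delta^2\leq C\delta$, which tends to $0$ as $\epsilon\to 0$. Hence $\gamma<a^2$ holds for all sufficiently small $\epsilon$ regardless of the relation between $a$ and $b$, so the same argument works under only the weaker hypothesis $ab>1$. The main difficulty is the first step — seeing that Lemma \ref{kwadraciki} still applies after deleting one index from the sequence; once this is observed, the rest is routine geometric-series bookkeeping.
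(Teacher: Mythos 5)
Your proposal is correct and follows essentially the same route as the paper: bound $\leb(\bnn)$ by the measure of $\bigcap_{j\neq n_0,\, j<n_1}A_j^c$, apply Lemma \ref{kwadraciki} to the sequence with $b_{n_0}$ deleted (the ratio condition surviving deletion is exactly the paper's observation), and split the double sum into the product of geometric series in $n_0$ and $n_1-n_0$ converging under $\gamma<a^2$, i.e.\ $a^2b>1$. Your explicit treatment of the $n_0=0$ edge case and of the moreover-part via Remark \ref{druga_czesc_wynikanie} matches the paper's intent.
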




\begin{proof}
Fix $0 \leq n_0 < n_1$.  We need to estimate the measures $\leb(\bnn)$.
By definition,  $\bnn=  A_{0}^c\cap  A_{1}^c\cap  \dots \cap A_{n_0-1}^c\cap A_{n_0} \cap A_{n_0+1}^c \cap \dots \cap  A_{n_1-1}^c \cap A_{n_1}$. We obtain:

\begin{equation}\label{seta}
\leb(\bnn)\leq \leb(A_{0}^c\cap  A_{1}^c\cap  \dots \cap A_{n_0-1}^c \cap 
A_{n_0+1}^c\cap  A_{n_0+2}^c\cap  \dots \cap A_{n_1-1}^c)
\end{equation}

Now set  $A =\left(J \times J\right) \cap A_0^c$, 
$$\tilde{b}_n = \begin{cases}
b_n & \textnormal{ for }  n = 0, \dots, n_0-1\\
b_{n+1} &  \textnormal{ for }  n \geq n_0
\end{cases}$$
where {$\frac{\tilde{b}_{n+1}}{\tilde{b}_n}\geq b$} and 
$$\tilde{\theta}_n = \begin{cases}
\theta_n & \textnormal{ for }  n = 0, \dots, n_0-1\\
\theta_{n+1} &  \textnormal{ for }  n \geq n_0.
\end{cases}$$

Then the set $\A_{n_1-2}(\underline(\Theta))$ from Definition \ref{22} is equal to $A_{0}^c\cap  A_{1}^c\cap  \dots \cap A_{n_0-1}^c \cap A_{n_0+1}^c\cap  A_{n_0+2}^c\cap  \dots \cap A_{n_1-1}^c$.
We may apply Lemma  \ref{properties_g} and Lemma \ref{kwadraciki} to obtain
\begin{equation*}\label{szac1}
\leb(A_{0}^c\cap  A_{1}^c\cap  \dots \cap A_{n_0-1}^c \cap A_{n_0+1}^c\cap  A_{n_0+2}^c\cap  \dots \cap A_{n_1-1}^c)\leq \tilde{C} \gamma^{n_1-2}
\end{equation*}
for $\gamma < \frac{1}{b} + \epsilon_0$, where $\epsilon_0$ can be arbitrarily small.
From this and  \eqref{seta} we get:
\begin{equation}\label{last}
\begin{split}
\sum_{n_0}\sum_{n_1 > n_0} \frac{\leb(\bnn)}{a^{n_0} a^{n_1}} \leq 
 \tilde{C} \sum_{n_0}\sum_{n_1 > n_0} \frac{\gamma^{n_1-n_0+ n_0-2}}{a^{n_0+n_1}} \\
= \frac{\tilde{C}}{\gamma^2}\sum_{n_0}\frac{\gamma^{n_0}}{a^{2n_0}}\sum_{n_1-n_0> 0}\frac{\gamma^{n_1-n_0}}{a^{n_1-n_0}} = \frac{\tilde{C}}{\gamma^2 (1-\gamma)(1-\frac{\gamma}{a^2})}<\infty
\end{split}
\end{equation}
as $\frac{\gamma}{a^2}< \frac{1}{a^2}\frac{1}{b}+\frac{\epsilon_0}{a^2}<1$, because $a^2b>1$.
So that \eqref{szereg_bn} is satisfied and Lemma \ref{glowny} is proved.
\end{proof}

\begin{remark}
By Remark \ref{druga_czesc_wynikanie}, if $b_n = b^n, \ b \in \N, b>1$ and $\theta_n=0$ for every $n\in \N$ instead of $a^2b > 1$ it is sufficient to have  $ab>1$.

This proves the second part of Theorem B.
\end{remark}
\begin{proof}
As $\gamma$ can be arbitrarily small, we can take $\gamma < \frac{a}{b}$, hence $\frac{\gamma}{a^2} < \frac{1}{ab} < 1$ and the inequality \eqref{last} holds.
\end{proof}

\begin{proof}[Proof of Proposition \ref{main_fact}]

Summing up all previous lemmas we obtain:
\begin{equation}
\begin{split}
I \leq  I_1+I_2+I_3 \leq \left|I_1\right| + 2\left| I_2\right| \leq\\
 \leb(J^2) 2M + 2 \frac{\tilde{C}}{\gamma^2 (1-\gamma)(1-\frac{\gamma}{a^2})\epsilon^2}\left|\int_M^\infty  \frac{1}{u^2}du\right|
<\infty
\end{split}
\end{equation}
which concludes the proof of Proposition \ref{main_fact} and thus the proof of both parts of Theorem B.
\end{proof}

\section{Example - the Weierstrass function}

Let us consider the function
$$W(x) =\szereg a_n \cos(2\pi b^n x)$$
where $(a_n)_{n=0}^\infty$ - independent random variables with uniform distribution on $(-a^n, a^n)$, for $0<a<1<b$, $ab>1$ and $b\in\N$. Here $g(x)=\cos(2\pi x)$.

The sets $A_n^c$ have the form:
\begin{eqnarray*}\label{anc}
 A_n^c & = & \left\{ (x,y):  \left| \cos (2\pi b^n x) - \cos (2\pi b^n y) \right| < \epsilon \right\}  =   \left\{ (\frac{x}{2\pi b^n}, \frac{y}{2\pi b^n}):  \left| \cos x -\cos y \right|< \epsilon \right\} \\
 & = &\frac{1}{2\pi b^n} \left\{(x,y): \left| 2 \cos(\frac{x+y}{2}) \cos(\frac{x-y}{2})\right|<\epsilon \right\} \notag\\
 &= & \frac{1}{2\pi b^n}\left\{(u+v,u-v):\  \left|2\cos u \cos v\right| < \epsilon \right\}\notag
\end{eqnarray*}

\begin{figure}[lh]
\centering
 
\input{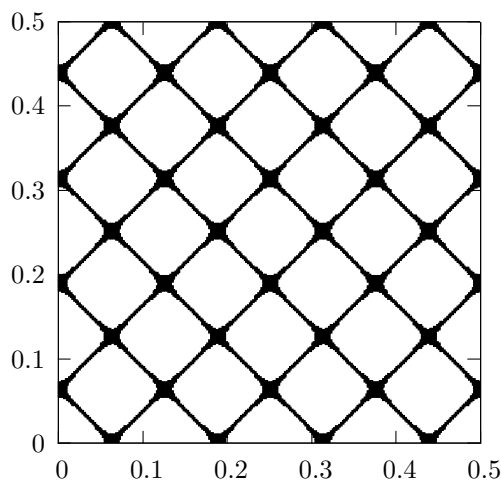}
 \caption{Set $A_n^c$ for $g=\cos$, $a=0.8$, $b=2$, $N=2$ and $\epsilon = 0.05$.}
 \label{zbiorA}
\end{figure}

The inequality $|\cos u \cos v| < \frac{\epsilon}{2}$ is true when both $|\cos u | \leq \sqrt{\frac{\epsilon}{2}}$ and $|\cos v | \leq \sqrt{\frac{\epsilon}{2}}$ (both $|\cos u| \leq 1$ and $\epsilon \leq 1$). Cosine near its zeros behaves nearly like linear function, so we can approximate the set $A_n^c$ by a sum of rectangles with width at most $C\sqrt{\epsilon}$ for constant $C>0$. It is illustrated in Figure \ref{zbiorA}.

From the second part of Theorem B we obtain that the occupation measure on the graph of $W(x)$ has $L^2$ density almost surely and from Theorem A we obtain that the Hausdorff dimension of the graph is almost surely equal to $D = 2+\frac{\log a }{\log b}$.

\bibliography{mybib}
\bibliographystyle{amsplain}

\end{document}